\tikzstyle{printersafe}=[snake=snake,segment amplitude=0 pt]
\newtheorem{problem}{\em Problem}
\newtheorem{theorem}{\em Theorem}
\newtheorem{conjecture}{\em Conjecture}
\newtheorem{lemma}{\em Lemma}
\newtheorem{observation}{\em Observation}
\journal{Latex Templates}
\begin{document}

\begin{frontmatter}

\title{An NP-hardness result for the colored constrained maximum 2-edge-colorable subgraph problem in bipartite graphs}
%\tnotetext[label0]{The work on this paper has been supported by the National Science Foundation through Grant CCF-2008422}

\author[label1]{Vahan Mkrtchyan}
\address[label1]{Department of Mathematics and Computer Science,\\ College of the Holy Cross, Worcester, MA, USA}
%\address[label2]{Address Two\fnref{label4}}

%\cortext[cor1]{I am corresponding author}
%\fntext[label3]{I also want to inform about\ldots}
%\fntext[label4]{Small city}
%
\ead{vahan.mkrtchyan@gssi.it}
%\ead[url]{author-one-homepage.com}
%
%\author[label5]{Author Two}
%\address[label5]{Some University}
%\ead{author.two@mail.com}
%
%\author[label1,label5]{Author Three}
%\ead{author.three@mail.com}

\begin{abstract}
In this paper, we consider the maximum $k$-edge-colorable subgraph problem. In this problem we are given a graph $G$ and a positive integer $k$, the goal is to take $k$ matchings of $G$ such that their union contains maximum number of edges. This problem is NP-hard in cubic graphs, and polynomial-time solvable in bipartite graphs as we observe in our paper. We present an NP-hardness result for a version of this problem where we have color constraints on vertices. In fact, we show that this version is NP-hard already in bipartite graphs of maximum degree three. In order to achieve the result, we establish a connection between our problem and the problem of construction of special maximum matchings considered in the Master thesis of the author and defended back in 2003.  
\end{abstract}
%and the existence of a similar algorithm for the largest cardinality matching problem. 

\begin{keyword}
%% keywords here, in the form: keyword \sep keyword
Matching \sep pair of matchings \sep maximum 2-edge-colorable subgraph problem \sep NP-completeness.
%% MSC codes here, in the form: \MSC code \sep code
%% or \MSC[2008] code \sep code (2000 is the default)
\MSC[2020] 05C85 \sep 68R10 \sep 05C15 \sep 05C70
\end{keyword}

\end{frontmatter}

%%
%% Start line numbering here if you want
%%
% \linenumbers

%% main text

\section{Introduction}
\label{IntroSection}

In this paper, we consider finite, undirected graphs without loops or parallel edges. The set of vertices and edges of a graph $G$ is denoted by $V(G)$ and $E(G)$, respectively. The degree of a vertex $v$ of $G$ is denoted by $d_{G}(v)$. Let $\Delta(G)$ and $\delta(G)$ be the maximum and minimum degree of a vertex of $G$. A graph $G$ is regular, if $\delta(G)=\Delta(G)$. A graph is cubic if $\delta(G)=\Delta(G)=3$. The girth of the graph is the length of the shortest cycle in it. A matching in a graph $G$ is a subset of edges such that no vertex of $G$ is incident to two edges from the subset. A maximum matching is a matching that contains maximum possible number of edges. A maximum matching is perfect if every vertex of the graph is incident to an edge from the perfect matching. If $M$ is a matching of a graph $G$, then an $M$-augmenting path of $G$ is a simple path of odd length, such that the edges with odd indices lie outside $M$, and the edges with even indices belong to $M$. 

A graph is bipartite if its set of vertices can be divided into two disjoint sets $U_1$ and $U_2$, such that every edge connects a vertex in $U_1$ to one in $U_2$. A graph is called a forest if it does not contain a cycle. Note that a forest can be disconnected. In a special case when it is connected, the graph is called a tree.

If $k\geq 0$, then a graph $G$ is called {$k$-edge colorable}, if its edges can be assigned colors from a set of $k$ colors so that adjacent
edges receive different colors. The smallest integer $k$, such that $G$ is $k$-edge-colorable is called chromatic index of $G$ and is denoted by $\chi'(G)$. The classical theorem of Shannon states that if $G$ is a multi-graph then $\Delta(G)\leq \chi'(G) \leq \left \lfloor \frac{3\Delta(G)}{2} \right \rfloor$ \cite{Shannon:1949,stiebitz:2012}. On the other hand, the classical theorem of Vizing states that $\Delta(G)\leq \chi'(G) \leq \Delta(G)+\mu(G)$ for any multi-graph $G$ \cite{stiebitz:2012,vizing:1964}. Here $\mu(G)$ is the maximum multiplicity of an edge of $G$. A graph is class I if $\chi'(G)=\Delta(G)$, otherwise it is class II.

%If the edges of $G$ are colored, then for a color $\alpha$ let $E_{\alpha}$ be the set of edges of $G$ that are colored with $\alpha$. Observe that $E_{\alpha}$ is a matching. We say that a vertex $v$ is incident to the color $\alpha$, if $v$ is incident to an edge from $E_{\alpha}$. If $v$ is not incident to the color $\alpha$, then we say that $v$ misses the color $\alpha$. Now, if we have two different colors $\alpha$ and $\beta$, then consider the subgraph of $G$ induced by $E_{\alpha}\cup E_{\beta}$. Observe that the components of this subgraph are paths or even cycles. The components which are paths are usually called $\alpha-\beta$-alternating paths or Kempe chains \cite{stiebitz:2012}. If $P$ is an $\alpha-\beta$-alternating path connecting the vertices $u$ and $v$, then we can exchange the colors on $P$ and obtain a new edge-coloring of $G$. Observe that if $u$ is incident to the color $\alpha$ in the former edge-coloring, then in the new one it will miss the color $\alpha$.

If $k<\chi'(G)$, we cannot color all edges of $G$ with $k$ colors. Thus it is reasonable to investigate the maximum number of edges that one can color with $k$ colors. A subgraph $H$ of a graph $G$ is called {maximum $k$-edge-colorable}, if $H$ is $k$-edge-colorable and contains the maximum number of edges among all $k$-edge-colorable subgraphs. For $k\geq 0$ and a graph $G$ let
\[\nu_{k}(G) = \max \{ |E(H)| : H \text{ is a $k$-edge-colorable subgraph of } G \}. \]
Clearly, a $k$-edge-colorable subgraph is maximum if it contains exactly $\nu_k(G)$ edges. Note that $\nu_1(G)$ is the size of a maximum matching in $G$. Usually, we will shorten the notation $\nu_1(G)$ to $\nu(G)$.

From the first glance it may seem that if we have a maximum $k$-edge-colorable subgraph of a graph, then by adding some edges to it, we can get a maximum $(k+1)$-edge-colorable subgraph. The example from Figure \ref{fig:Examplenu1nu2} shows that this is not true. It has a unique perfect matching, which contains the edge joining the two degree-three vertices. However, the unique maximum $2$-edge-colorable subgraph of it contains all its eight edges except the edge joining the two degree-three vertices.

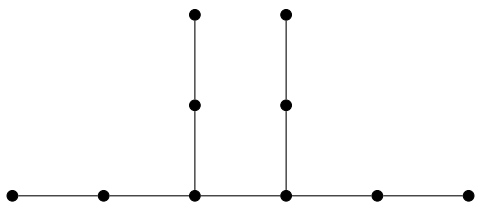
\begin{figure}[ht]
\centering
  
  \begin{center}

		\begin{tikzpicture}[scale = 0.6]%[-,>=stealth',shorten >=1pt,auto,node distance=3cm,
		%	thick,main node/.style={circle,fill=blue!20,draw,font=\sffamily\Large\bfseries}]
			% \begin{tikzpicture}
			
			%\draw [loosely dashed] (13,0) -- (17,0);
			
			%\node at (-5.5,2) {$pr_1(a_1)-pr_2(a_1)$}; \node at (0.5,2) {$pr_2(a_1)$};
			
			%\node at (7.45,2) {$pr_1(a_2)-pr_2(a_2)$}; \node at (13.5,2) {$pr_2(a_2)$};
			
			%\node at (18.33,2) {$pr_1(a_n)-pr_2(a_n)$}; \node at (23.5,2) {$pr_2(a_n)$};

			\tikzstyle{every node}=[circle, draw, fill=black!50,inner sep=0pt, minimum width=4pt]
																							
			\node[circle,fill=black,draw] at (0,0) (n00) {};
			
			\node[circle,fill=black,draw] at (-2, 0) (nm20) {};
			% \node at (-3.3, 3.35) (l2) {$(0,1)$};
																								
			\node[circle,fill=black,draw] at (-4,0) (nm40) {};

                \node[circle,fill=black,draw] at (0,2) (n02) {};

                \node[circle,fill=black,draw] at (0,4) (n04) {};
                
			%  \node at (-6.8, 3.35) (l2) {$(0,1)$};

                \node[circle,fill=black,draw] at (2,0) (n20) {};

                \node[circle,fill=black,draw] at (4,0) (n40) {};

                \node[circle,fill=black,draw] at (6,0) (n60) {};

                \node[circle,fill=black,draw] at (2,2) (n22) {};

                \node[circle,fill=black,draw] at (2,4) (n24) {};

               % \node[circle,fill=black,draw] at (6,0) (n60) {};
			
			%%%%%%%%%%%%%%%%%%%%%%%%%%%%%%%%%%%%%%%%%%%%%%%%%%%%%%%%%%%%

			%\node[circle,fill=black,draw] at (-3.5,-2) (n10) {};

			\path[every node]
			
			(n00) edge (nm20)
                (nm20) edge (nm40)

                (n00) edge (n02)
                (n02) edge (n04)

                (n00) edge (n20)
   
			%(n00) edge (n44)

                (n20) edge (n40)
                (n40) edge (n60)
			
			(n20) edge (n22)
                (n22) edge (n24)

			;
		\end{tikzpicture}
																
	\end{center}
								
	\caption{A graph in which the maximum matching is not a subset of a maximum $2$-edge-colorable subgraph.}
	\label{fig:Examplenu1nu2}
\end{figure}

There are several papers where the ratio $\frac{|E(H_k)|}{|E(G)|}$ has been investigated. Here $H_k$ is a maximum $k$-edge-colorable subgraph of $G$. In \cite{bollobas:1978,henning:2007,nishizeki:1981,nishizeki:1979,weinstein:1974} lower bounds are proved for the ratio when the graph is regular and $k=1$. For regular graphs of high girth the bounds are improved in \cite{flaxman:2007}. Albertson and Haas have investigated the problem in \cite{haas:1996,haas:1997} when $G$ is a cubic graph. See also \cite{samvel:2010,corrigendum}, where the authors proved that for every cubic graph $G$ $\nu _{2}(G) \geq \frac{4}{5}|V(G)|$ and $\nu _{3}(G) \geq  \frac{7}{6} |V(G)|$. Moreover, \cite{samvel:2014} shows that for any cubic graph $G$ $\nu _{2}(G) + \nu _{3}(G) \geq 2|V(G)|$.

   % \begin{equation*}
    %\nu _{2}(G) \geq \frac{4}{5}|V(G)| \text{ and } \nu _{3}(G) \geq  \frac{7}{6} |V(G)|.
    %\end{equation*}
    %Moreover, \cite{samvel:2014} shows that for any cubic graph $G$
    %\begin{equation*}
    %\nu _{2}(G) + \nu _{3}(G) \geq 2|V(G)|.
    %\end{equation*} 
    
    Bridgeless cubic graphs that are not $3$-edge-colorable are usually called snarks \cite{cavi:1998}, and the problem for snarks is investigated by Steffen in \cite{steffen:1998,steffen:2004}. This lower bound has also been investigated in the case when the graphs need not be cubic in \cite{miXumbFranciaciq:2013,Kaminski:2014,Rizzi:2009}. Kosowski and Rizzi have investigated the problem from the algorithmic perspective \cite{Kosowski:2009,Rizzi:2009}, see also \cite{FeigeOfekWieder}. Since the problem of constructing a $k$-edge-colorable graph in an input graph is NP-complete for each fixed $k\geq 2$, it is natural to investigate the (polynomial) approximability of the problem. In \cite{Kosowski:2009}, for each $k\geq 2$ an algorithm for the problem is presented. There for each fixed value of $k \geq 2$, algorithms are proved to have certain approximation ratios and they are tending to $1$ as $k$ tends to infinity. See the paper, \cite{AKS2022,AINA2024,JGAA2024} for recent results on this problem.

Some structural properties of maximum $k$-edge-colorable subgraphs of graphs are proved in \cite{samvel:2014,MkSteffen:2012}. In particular, there it is shown that every set of disjoint cycles of a graph with $\Delta=\Delta(G) \geq 3$ can be extended to a maximum $\Delta$-edge colorable subgraph.  Also there it is shown that a maximum $\Delta$-edge colorable subgraph of a simple graph is always class I. Finally, if $G$ is a graph with girth $g \in \left \{ 2k, 2k+1 \right \} (k \geq 1)$ and $H$ is a maximum $\Delta$-edge colorable subgraph of $G$, then $\frac{|E(H)|}{|E(G)|} \geq \frac{2k}{2k+1}$ and the bound is best possible is a sense that there is an example attaining it. See \cite{Cao:2023} for recent results that deal with partitioning any graph into two class I subgraphs.

In \cite{samvel:2010} Mkrtchyan et al. proved that $\nu_{2}(G) \leq \frac{|V(G)| + 2\nu_{3}(G)}{4}$ for any cubic graph $G$. For bridgeless cubic graphs, which by Petersen theorem have a perfect matching \cite{Lovasz}, this inequality becomes, $\nu _{2}(G)\leq \frac{\nu _{1}(G)+\nu _{3}(G)}{2}$. One may wonder whether there are other interesting graph-classes, where a relation between $\nu_2(G)$ and $\frac{\nu_1(G)+\nu_3(G)}{2}$ can be proved. The main result of \cite{LianaDAM:2019} states that for a given integer $k\geq $0 and a finite bipartite multi-graph $G$, without loops, the following inequality holds
\begin{equation*}
\nu_k(G)\geq \frac{\nu_{k-i}(G)+\nu_{k+i}(G)}{2},
\end{equation*}
for $i=0,1,...,k$. Note that \cite{LianaDAM:2019} predicts that
\begin{conjecture}(\cite{LianaDAM:2019})
    Given an integer $k\geq $0 and a finite multi-graph $G$, without loops, the following inequality holds
\begin{equation*}
\nu_k(G)\geq \frac{\nu_{k-i}(G)+\nu_{k+i}(G)-b(G)}{2},
\end{equation*}
for $i=0,1,...,k$. Here $b(G)$ denotes the smallest number of vertices of $G$ whose removal leads to a bipartite graph.
\end{conjecture}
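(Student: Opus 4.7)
The plan is to mirror the bipartite proof of \cite{LianaDAM:2019}, paying only a constant correction for the non-bipartite part of $G$. Fix a set $S\subseteq V(G)$ of size $b(G)$ whose removal makes $G$ bipartite, and let $A$ and $B$ be maximum $(k-i)$- and $(k+i)$-edge-colorable subgraphs of $G$ respectively. Form the multigraph $H$ on $V(G)$ whose edge multiset is $E(A)\uplus E(B)$, so that edges in $A\cap B$ have multiplicity $2$ and all other edges have multiplicity $1$. Since $A$ decomposes into $k-i$ matchings and $B$ into $k+i$ matchings, $\Delta(H)\le 2k$.

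Next I would restrict to the bipartite submultigraph $H':=H-S$ and apply K\"onig's edge-coloring theorem to obtain a proper $2k$-edge-coloring $c'$ of $H'$. The crux is then to extend $c'$ to a proper edge-coloring $c$ of all of $H$ using the same palette of $2k$ colors, possibly at the cost of discarding a bounded number of edges. For each $u\in V\setminus S$, the number of colors of $c'$ unused at $u$ equals $2k-d_{H'}(u)$, which is exactly the number of edges of $H$ joining $u$ to $S$; hence the extension amounts to a list-edge-coloring of the bipartite graph of $S$-to-$(V\setminus S)$ edges, together with a proper edge-coloring from the full palette of the edges spanned by $S$.

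Once such a coloring $c$ is in place, I would partition the $2k$ colors into two sets $C_1, C_2$ with $|C_1|=|C_2|=k$, yielding two $k$-edge-colorable submultigraphs of $G$ whose underlying simple subgraphs have total edge count equal to $|E(A)|+|E(B)|$ minus the edges discarded during the extension. Averaging gives a $k$-edge-colorable subgraph of $G$ of size at least $(\nu_{k-i}(G)+\nu_{k+i}(G)-D)/2$, where $D$ is the number of discarded edges, which gives the conjectured inequality provided $D\le b(G)$.

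The hard step is the third one: showing $D\le b(G)$. A natural attack is by Kempe-type swaps: any color conflict at a vertex $v\in S$ involves two edges of the same color, and interchanging a pair of colors along an alternating chain rooted at $v$ either terminates at another vertex of $S$ (cancelling two conflicts simultaneously) or closes into an odd cycle that must traverse $S$, because $G-S$ is bipartite. An ear-decomposition argument on the odd-cycle structure carried by $S$, combined with a careful accounting that charges each irreducible conflict to a distinct vertex of $S$, should yield $D\le|S|=b(G)$. Making this accounting rigorous while controlling the risk that successive swaps exhaust the $2k$-color budget is the principal technical obstacle, and is presumably why the inequality has been left as a conjecture in \cite{LianaDAM:2019}.
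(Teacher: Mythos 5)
This statement is not a theorem of the paper: it is stated (and cited) as an open \emph{conjecture} from \cite{LianaDAM:2019}, and the paper offers no proof of it, only a pointer to partial results in \cite{BASM2023}. So there is no proof in the paper to compare yours against, and a complete argument here would be a new result rather than a reconstruction.

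Your outline follows the natural strategy (superimpose maximum $(k-i)$- and $(k+i)$-edge-colorable subgraphs into a multigraph $H$ with $\Delta(H)\le 2k$, properly $2k$-edge-color it, and split the palette), but the step that carries all the content is exactly the one you leave open: showing that the proper $2k$-edge-coloring of the bipartite part $H-S$ can be extended to $H$ while discarding at most $b(G)=|S|$ edges. As stated, this bound is not plausible from the tools you invoke. The number of $H$-edges incident to $S$ can be as large as $2k|S|$, and the multigraph induced on $S$ alone can have chromatic index up to roughly $3k$ by Shannon's bound, so it need not even admit a proper coloring from the $2k$-color palette; nothing in the Kempe-chain sketch controls how many edges must be sacrificed, and the claim that each ``irreducible conflict'' can be charged to a distinct vertex of $S$ is asserted, not argued. (Conflicts at a single vertex of $S$ can number up to $2k$, so a per-vertex charge of $1$ requires resolving all but one of them, which is precisely what is unproven.) There is also a secondary issue you should flag even in the bipartite base case: when you split the $2k$ colors into $C_1$ and $C_2$, the two parallel copies of an edge of $A\cap B$ may both receive colors from the same $C_j$, in which case the underlying simple subgraphs do not together account for $|E(A)|+|E(B)|$ edges; the bipartite theorem of \cite{LianaDAM:2019} handles this, but your sketch does not. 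In short, the proposal is a reasonable research plan, not a proof, and the decisive inequality $D\le b(G)$ remains exactly as open as the conjecture itself.
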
 For partial results towards this conjecture see \cite{BASM2023}.

In this paper, we deal with the exact solvability of the maximum $k$-edge-colorable subgraph problem. Its precise formulation is the following:
\begin{problem}\label{prob:MaxkEdgeColsub}
	 Given a graph $G$ and an integer $k$, find a $k$-edge-colorable subgraph with maximum number of edges together with its $k$-edge-coloring.
\end{problem} %We investigate this problem from the perspective of fixed-parameter tractability. Recall that an algorithmic problem $\Pi$ is fixed-parameter tractable with respect to a parameter $\theta$, if there is an exact algorithm solving $\Pi$, whose running-time is $f(\theta)\cdot poly(size)$. Here $f$ is some (computable) function of $\theta$, $size$ is the length of the input and $poly$ is a polynomial function. A (parameterized) problem is paraNP-hard, if it remains NP-hard even when the parameter is constant.

In this paper, we focus on the maximum 2-edge-colorable subgraph problem which is the restriction of the problem to the case $k=2$. As we observe in our paper this problem is NP-hard in cubic graphs. Moreover, using the theory of maximum flows in networks, we show that it is polynomial-time solvable in bipartite graphs. The main result of the paper is an NP-hardness result for a version of this problem where we have color constraints on vertices. Our main result implies that this version is hard already in connected, bipartite graphs of maximum degree three. The main technical contribution of the paper is the establishment of a connection between our problems and the problem of construction of special maximum matchings considered in the Master thesis of the author and defended back in 2003. For the notions, facts and concepts that are not explained in the paper the reader is referred to the graph theory monograph of West \cite{west:1996}.

%\section{Main notations and definitions}\label{NotationsSection}

%\section{Some auxiliary results}\label{AuxResultsSection}

\section{Main results}
\label{MainSection}

In this section we obtain the main results of the paper. We start with the following observation:

\begin{observation}
    \label{obs:Cubics} The problem of computing $\nu_2(G)$ is NP-hard in cubic graphs.
\end{observation}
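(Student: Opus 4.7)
The plan is to reduce the NP-complete problem of deciding 3-edge-colorability of cubic graphs (Holyer's theorem) to the problem of computing $\nu_2(G)$ in cubic graphs. Concretely, I would show that for any cubic graph $G$ one has $\nu_2(G) = |V(G)|$ if and only if $\chi'(G) = 3$, so that an algorithm computing $\nu_2$ immediately decides 3-edge-colorability.

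First I would prove the upper bound $\nu_2(G) \leq |V(G)|$ for every cubic graph $G$: if $H = M_1 \cup M_2$ is a 2-edge-colorable subgraph with color classes $M_1, M_2$, then $\sum_{v} d_H(v) = 2|E(H)|$ while $d_H(v) \leq 2$ for every $v$, giving $|E(H)| \leq |V(G)|$. Next, I would analyse the case of equality. If $|E(H)| = |V(G)|$, then every vertex must satisfy $d_H(v) = 2$, forcing both $M_1$ and $M_2$ to be perfect matchings. The complement $M_3 := E(G) \setminus (M_1 \cup M_2)$ then has $d_{M_3}(v) = 3 - 2 = 1$ at every vertex, so $M_3$ is a perfect matching as well, and $\{M_1, M_2, M_3\}$ is a proper 3-edge-coloring of $G$.

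For the converse, if $G$ is 3-edge-colorable with color classes $M_1, M_2, M_3$ (all perfect matchings of size $|V(G)|/2$), then $M_1 \cup M_2$ is a 2-edge-colorable subgraph with $|V(G)|$ edges, so $\nu_2(G) \geq |V(G)|$; combined with the upper bound this yields equality. Hence $\nu_2(G) = |V(G)|$ is equivalent to $\chi'(G) = 3$ for cubic $G$.

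Finally, I would invoke Holyer's classical result that deciding whether a cubic graph admits a 3-edge-coloring is NP-complete. A polynomial-time algorithm computing $\nu_2(G)$ on cubic inputs would, by comparing the output with $|V(G)|$, decide 3-edge-colorability in polynomial time; so $\nu_2$ is NP-hard on cubic graphs. No technical step here is really an obstacle: the only nontrivial ingredient is the already-established NP-completeness of Holyer's problem, and the rest is the short degree-counting argument above.
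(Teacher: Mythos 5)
Your proposal is correct and follows essentially the same route as the paper: both reduce from Holyer's NP-completeness of 3-edge-colorability of cubic graphs via the equivalence $\nu_2(G)=|V(G)| \iff \chi'(G)=3$. You merely spell out the degree-counting details that the paper leaves implicit.
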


\begin{proof} Let $G$ be a cubic graph. Note that $G$ is 3-edge-colorable, if and only if $G$ contains a pair of edge-disjoint perfect matchings. This conditions is equivalent to $\nu_2(G)=|V|$. Since testing the 3-edge-colorability of a cubic graph is an NP-complete problem \cite{holyer:1981}, we get the result.
\end{proof}

Our next observation states that there is a polynomial algorithm to compute $\nu_k(G)$ in arbitrary (not necessarily cubic) bipartite graphs $G$ and $k\geq 0$.

\begin{observation}
    \label{obs:Bips} The problem of computing $\nu_k(G)$ is polynomial-time solvable in bipartite graphs.
\end{observation}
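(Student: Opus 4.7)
The plan is to reduce the problem to a well-known polynomially solvable problem, namely a maximum $b$-matching (equivalently, a maximum flow) in a bipartite network, and then invoke the polynomial-time edge-coloring algorithm for bipartite graphs to recover the actual $k$-edge-coloring.

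First, I would observe the following structural fact: for any bipartite graph $H$, the classical theorem of König states that $\chi'(H)=\Delta(H)$. Consequently, a subgraph $H$ of a bipartite graph $G$ is $k$-edge-colorable if and only if $\Delta(H)\leq k$. This reformulates the problem: we are looking for a subgraph $H\subseteq G$ with $\Delta(H)\leq k$ maximizing $|E(H)|$, which is exactly the maximum $k$-matching problem (a degree-constrained subgraph problem with uniform upper bound $k$ at every vertex).

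Second, I would give an explicit max-flow reduction. Let $G$ be bipartite with bipartition $(U_1,U_2)$. Construct a directed network $N$ by introducing a source $s$ and a sink $t$, adding arcs $(s,u)$ with capacity $k$ for every $u\in U_1$, arcs $(v,t)$ with capacity $k$ for every $v\in U_2$, and an arc $(u,v)$ with capacity $1$ for every edge $uv\in E(G)$ with $u\in U_1$, $v\in U_2$. Since all capacities are integral, there is an integral maximum $s$–$t$ flow, and its edges of unit capacity carrying one unit of flow form a subgraph $H$ of $G$ with $\Delta(H)\leq k$ and $|E(H)|$ maximum. Thus $\nu_k(G)$ equals the value of the maximum flow in $N$, which can be computed in polynomial time by any standard max-flow algorithm.

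Finally, to deliver not only the number $\nu_k(G)$ but also the maximum $k$-edge-colorable subgraph together with its coloring (as Problem~\ref{prob:MaxkEdgeColsub} requires), I would apply the polynomial-time bipartite edge-coloring algorithm to the subgraph $H$ extracted from the integral max flow; since $\Delta(H)\leq k$, this yields a valid $k$-edge-coloring of $H$. The main technical point is just the König-based reformulation that turns the problem into maximum flow; once this is in place, everything else is routine, so there is no real obstacle in the bipartite case.
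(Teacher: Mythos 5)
Your proposal is correct and follows essentially the same route as the paper: the same source--sink network with capacity-$k$ arcs at the source and sink and unit capacities on the original edges, integrality of the maximum flow, and K\"{o}nig's theorem to convert the degree-$\le k$ subgraph into a $k$-edge-colorable one. The only (welcome) addition is your explicit remark that a bipartite edge-coloring algorithm recovers the actual coloring, which the paper leaves implicit.
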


\begin{proof} We borrow ideas from \cite{CSIT2005}. Let $G=(A, B, E)$ be a bipartite graph with a bipartition $V=A\cup B$. Consider a network $H$ obtained from $G$ as follows (Figure \ref{fig:nukBipartiteNetworks}):
\begin{itemize}
    \item add a source $s$ joined to every vertex of $A$ with an arc of capacity $k$,

    \item orient every edge $e=uv\in E$ with $u\in A$ and $v\in B$ from $u$ to $v$ and assign it a capacity $1$,

    \item add a sink $t$ such that every vertex of $B$ is joined to $t$ with an arc of capacity $k$.
\end{itemize}

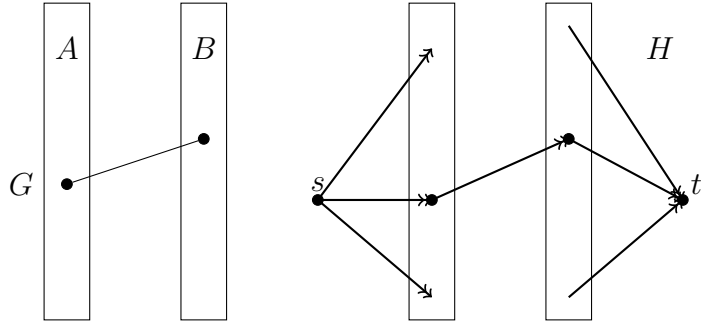
\begin{figure}[ht]
\centering
  
  \begin{center}

		\begin{tikzpicture}[scale = 0.6]%[-,>=stealth',shorten >=1pt,auto,node distance=3cm,
		%	thick,main node/.style={circle,fill=blue!20,draw,font=\sffamily\Large\bfseries}]
			% \begin{tikzpicture}
			
			%\draw [loosely dashed] (13,0) -- (17,0);
			
			%\node at (-5.5,2) {$pr_1(a_1)-pr_2(a_1)$}; \node at (0.5,2) {$pr_2(a_1)$};
			
			%\node at (7.45,2) {$pr_1(a_2)-pr_2(a_2)$}; \node at (13.5,2) {$pr_2(a_2)$};
			
			%\node at (18.33,2) {$pr_1(a_n)-pr_2(a_n)$}; \node at (23.5,2) {$pr_2(a_n)$};

                \draw[black] (0,0) rectangle (-1,7);

                \draw[black] (2,0) rectangle (3,7);

                \node at (-1.5,3) {$G$};
                \node at (-0.5,6) {$A$};
                \node at (2.5,6) {$B$};

                \node at (5,3) {$s$};
                \node at (13.3,3) {$t$};

                 \node at (12.5,6) {$H$};

                \draw[black] (7,0) rectangle (8,7);
                \draw[black] (10,0) rectangle (11,7);

                \draw[->>, thick] (7.5,2.65) -- (10.5,4);
                \draw[->>, thick] (5,2.65) -- (7.5,2.65);
			\draw[->>, thick] (10.5,4)--(13,2.65);

                \draw[->>, thick] (5,2.65)--(7.5,6);
                \draw[->>, thick] (5,2.65)--(7.5,0.5);
                \draw[->>, thick] (10.5,6.5)--(13,2.65);
                \draw[->>, thick] (10.5,0.5)--(13,2.65);
			
			\tikzstyle{every node}=[circle, draw, fill=black!50,inner sep=0pt, minimum width=4pt]

                \node[circle,fill=black,draw] at (5,2.65) (ns) {};
                \node[circle,fill=black,draw] at (7.5,2.65) (nu) {};
																							
			\node[circle,fill=black,draw] at (-0.5,3) (nm13) {};

                \node[circle,fill=black,draw] at (2.5,4) (n24) {};
                \node[circle,fill=black,draw] at (10.5,4) (nv) {};

                \node[circle,fill=black,draw] at (13,2.65) (nt) {};

			%%%%%%%%%%%%%%%%%%%%%%%%%%%%%%%%%%%%%%%%%%%%%%%%%%%%%%%%%%%%

			%\node[circle,fill=black,draw] at (-3.5,-2) (n10) {};

			\path[every node]
			
			(nm13) edge (n24)

			;
		\end{tikzpicture}
																
	\end{center}
								
	\caption{The network $H$ obtained from the bipartite graph $G$. The source $s$ of $H$ is joined to every vertex of $A$ with an arc of capacity $k$. Every edge of $G$ is replaced with an arc of capacity $1$. Every vertex of $B$ is joined to the sink $t$ with an arc of capacity $k$.}
	\label{fig:nukBipartiteNetworks}
\end{figure}

Let $f_{max}(H)$ denote the value of the maximum flow in $H$. We claim that
\[f_{max}(H)=\nu_k(G).\]
For the proof of this statement, let us start with an arbitrary $k$-edge-colorable subgraph $I_k$ of $G$. Define a function $f$ on edges of $H$ as follows:
\begin{itemize}
    \item for every edge $e\in E(G)$, $f(e)=1$ if $e\in I_k$, and $f(e)=0$ if $e\notin I_k$; 

    \item for every directed edge $e=su$, $u\in A$ of $H$ define $f(e)=d_{I_k}(u)$;

    \item for every directed edge $e=vt$, $v\in B$ of $H$ define $f(e)=d_{I_k}(v)$.
\end{itemize} It can be easily seen that $f$ is a flow in $H$ whose value is $|E(I_k)|$. Hence,
\[f_{max}(H)\geq \nu_k(G).\]
Now, for the proof of the converse inequality, consider a maximum flow $f_m$ in $H$. Note that since capacities in $H$ are integral (they are either 1 or $k$), via standard results in flow theory (see \cite{Lovasz}) we can assume that $f_m$ takes integral values. Consider a spanning subgraph $J_k$ of $G$ obtained from $f_m$ as follows: we take the edge $e=uv$, $u\in A$, $v\in B$ in $J_k$ if $f_m(e)=1$. Note that since the directed edges incident to $s$ or $t$ are of capacity $k$, we have that all vertices in $J_k$ are of degree at most $k$. By the classical theorem of K\"{o}nig (see \cite{Lovasz}), $J_k$ is $k$-edge-colorable. Moreover, the value of $f_m$ is equal to $|E(J_k)|$. Hence
\[f_{max}(H)=|E(J_k)|\leq \nu_k(G).\]
\end{proof}

Observation \ref{obs:Bips} is also shown in \cite{Rugby2021} (see Theorem 1 (b)). There the considered problem is called Multi-STC with two colors but it is discussed that it is equivalent to 2-edge-colorable subgraph on triangle-free graphs and the NP-hardness is shown via triangle-free graphs.

If $H_0\subseteq G$ is a subgraph of $G$ and $p:E(G)\rightarrow \mathbb{N}$ is an edge-weight function, then the $p$-weight of $H_0$ is defined as
\[p(H_0)=p(E(H_0))=\sum_{e\in E(H_0)}p(e).\]
%In the maximum weighted $k$-edge-colorable subgraph problem we are searching a $k$-edge-colorable subgraph $H_0$ maximizing $p(H_0)$ together with the $k$-edge-coloring of $H_0$. 
The following lemma is from \cite{JGAA2024}. Its proof uses ideas that are present already in \cite{AINA2021}.

\begin{lemma}
\label{lem:ForestConstraintsWeightedKmatchings} (\cite{AINA2021,JGAA2024}) Let $k\geq 1$ and $G$ be an edge-weighted forest with $p:E\rightarrow \mathbb{N}$. Suppose $W:V\rightarrow 2^{\{1,...,k\}}$ is a function that assigns each vertex $u$ a subset $W(u)\subseteq \{1,...,k\}$ of admissible colors. Then, there is an $O((k+1)\cdot 2^{2k}\cdot |V|)$-time algorithm that finds a largest weighted $k$-edge-colorable subgraph (with respect to $p$) with the constraint that around every vertex $v$ {\bf only} colors from $\{\mathit{0}\}\cup W(v)$ appear.
%linear time algorithm that finds a largest 2-edge-colorable subgraph such that the vertices of $S_j$ are not incident to edges with color $j$ ($j=1,2$). %The time complexity is $O(3\cdot 2^6n)$, where $n$ is the number of vertices of the forest $F$.
\end{lemma}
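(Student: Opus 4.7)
The plan is a bottom-up dynamic program on the forest. Since the objective splits additively over connected components, I would fix one tree $T$ of $G$ and root it at an arbitrary vertex $r$ (and repeat for every tree of the forest). For every vertex $v$ and every subset $S \subseteq \{1, \ldots, k\}$, let $f(v, S)$ be the maximum $p$-weight of a $k$-edge-colored subgraph $H$ of $T_v$ (the subtree rooted at $v$) satisfying the list constraint ``every vertex $w$ sees only colors in $\{0\} \cup W(w)$'' (where color $0$ marks an edge excluded from $H$) and such that the set of nonzero colors appearing at $v$ is exactly $S$; put $f(v, S) = -\infty$ if no such $H$ exists. The answer for $T$ is $\max_S f(r, S)$, and for a leaf $\ell$ the only nontrivial entry is $f(\ell, \emptyset) = 0$.

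For an internal vertex $v$ with children $u_1, \ldots, u_d$, I would compute $f(v, \cdot)$ incrementally by processing the children one at a time. Let $f^{(i)}(v, S)$ denote the analogous value using only the first $i$ children. The transition from $f^{(i)}$ to $f^{(i+1)}$ encodes the two cases for the edge $v u_{i+1}$: either it lies outside $H$, in which case $S$ is inherited from $f^{(i)}$ and the child contributes $\max_T f(u_{i+1}, T)$; or it lies inside $H$ with some color $c \in W(v) \cap W(u_{i+1})$, in which case $c$ must belong to $S$, the child state $T$ must satisfy $c \notin T$, and the contribution is $p(v u_{i+1}) + f(u_{i+1}, T)$. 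Correctness follows by a straightforward induction on the subtree: a valid coloring of $T_v$ decomposes into valid colorings of each $T_{u_i}$ together with independent include/exclude/color decisions on the edges $v u_i$, the only constraint coupling the children being that the colors chosen at $v$ are pairwise distinct, which is precisely what $S$ records. The optimal subgraph itself is recovered by standard DP backtracking.

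For the complexity, each vertex carries a table of $2^k$ entries. When merging a child's table with $v$'s partial table, a crude bound enumerates pairs $(S, T)$ of subsets at the two endpoints together with the $k+1$ options for the color of the edge between them (``no color'' included), giving $O((k+1) \cdot 2^{2k})$ work per edge and $O((k+1) \cdot 2^{2k} \cdot |V|)$ in total, as claimed. The main subtlety I expect is keeping the ``exactly $S$'' accounting consistent through the child-by-child merging and the include/exclude case split on each edge; once that is pinned down, everything else is routine forest DP.
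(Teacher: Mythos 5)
Your proposal is correct, and it is essentially the argument behind this lemma: the paper itself does not reprove it but imports it from \cite{AINA2021,JGAA2024}, where the proof is exactly this kind of rooted-tree dynamic program with states indexed by the subset of nonzero colors appearing at a vertex, merged child by child with the include/exclude/color case split on each edge. Your accounting of the ``exactly $S$'' semantics and the $O((k+1)\cdot 2^{2k})$ work per edge matches the claimed running time, so there is nothing to add.
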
 Note that in this lemma the running time of the algorithm has exponential dependence on $k$. Moreover, we can have multiple edges of color $0$ around a vertex $u$. However, the colors from $W(u)$ can appear at most once around it.

\medskip

In this paper, we will be interested in the following question: can we generalize Lemma \ref{lem:ForestConstraintsWeightedKmatchings} to arbitrary bipartite graphs? The question is legitimate and interesting because of Observation \ref{obs:Bips}.

The main result of this paper implies that under the assumption $P\neq NP$, our question has a negative answer. We obtain our hardness result for the following case:
\begin{itemize}
    \item $k=2$, there are no weights on edges ($w(e)=1$ for every edge $e\in E$), however there are subsets $W(u)\subseteq \{1,...,k\}$ of admissible colors around every vertex $u\in V$.
\end{itemize}

Note that if there are weights on edges, however there are no subsets $W(u)\subseteq \{1,...,k\}$ of admissible colors around every vertex $u\in V$, that is, $W(u)=\{1,...,k\}$ for every vertex $u\in V$ and $k$ is arbitrary, this version of our problem is polynomial-time solvable by the classical result of Gabow \cite{Gabow83}.

Rather surprisingly, in order to obtain our result, we establish a connection between our problem and the Master thesis of the author that was defended in 2003 (see \cite{KM2008} for its journal version). Since we will need some details from \cite{KM2008}, let us present them first. \cite{KM2008} deals with the following question. Let $G$ be a graph. Define:
\[\ell(G)=\min\{\nu(G\backslash F): F\text{ is a maximum matching of }G\},\]
and
\[L(G)=\max\{\nu(G\backslash F): F\text{ is a maximum matching of }G\}.\]

Note that if $G$ is the path of length four (Figure \ref{fig:Example4path}), then $\ell(G)=1$ and $L(G)=2$. In \cite{KM2008}, the problem of computing $\ell(G)$ and $L(G)$ is considered. The main contributions of the paper are that both of these parameters are NP-hard to compute in the connected, bipartite graphs of maximum degree three.

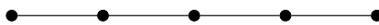
\begin{figure}[ht]
\centering
  
  \begin{center}

		\begin{tikzpicture}[scale = 0.6]%[-,>=stealth',shorten >=1pt,auto,node distance=3cm,
		%	thick,main node/.style={circle,fill=blue!20,draw,font=\sffamily\Large\bfseries}]
			% \begin{tikzpicture}
			
			%\draw [loosely dashed] (13,0) -- (17,0);
			
			%\node at (-5.5,2) {$pr_1(a_1)-pr_2(a_1)$}; \node at (0.5,2) {$pr_2(a_1)$};
			
			%\node at (7.45,2) {$pr_1(a_2)-pr_2(a_2)$}; \node at (13.5,2) {$pr_2(a_2)$};
			
			%\node at (18.33,2) {$pr_1(a_n)-pr_2(a_n)$}; \node at (23.5,2) {$pr_2(a_n)$};

			\tikzstyle{every node}=[circle, draw, fill=black!50,inner sep=0pt, minimum width=4pt]
																							
			\node[circle,fill=black,draw] at (0,0) (n00) {};

                \node[circle,fill=black,draw] at (2,0) (n20) {};

                \node[circle,fill=black,draw] at (4,0) (n40) {};

                \node[circle,fill=black,draw] at (6,0) (n60) {};

                \node[circle,fill=black,draw] at (8,0) (n80) {};

			%%%%%%%%%%%%%%%%%%%%%%%%%%%%%%%%%%%%%%%%%%%%%%%%%%%%%%%%%%%%

			%\node[circle,fill=black,draw] at (-3.5,-2) (n10) {};

			\path[every node]
			
			(n00) edge (n20)
                (n20) edge (n40)
                (n40) edge (n60)
                (n60) edge (n80)

			;
		\end{tikzpicture}
																
	\end{center}
								
	\caption{In the path of length four, $\ell(G)=1$ and $L(G)=2$.}
	\label{fig:Example4path}
\end{figure}

The reductions presented in \cite{KM2008} are from Max $2$-SAT that we define below.
\begin{problem}
    \label{prob:Max2SAT} (Max $2$-SAT) On the input we are given $m$ clauses $C_1, ..., C_m$ each of which containing two literals of boolean variables $x_1,..., x_n$, and a number $K\leq m$. The goal is to check whether there is a truth assignment $\Tilde{\alpha}=(\alpha_1,...,\alpha_n)$, such that at least $K$ of clauses $C_1, ..., C_m$ are satisfied by $\Tilde{\alpha}$.
\end{problem} Though $2$-SAT is polynomial-time solvable \cite{Pap1994}, Max $2$-SAT is NP-complete \cite{GJ,Max2SAT}. Like it is done in \cite{KM2008}, we will assume that every boolean variable $x_i$ appears in at least two clauses $C_j$. 

%\begin{figure}[ht]
%
%\end{figure}

\begin{figure}[ht]
\centering
\begin{minipage}[b]{.5\textwidth}
  \begin{center}
	\begin{tikzpicture}[scale = 0.6]%[-,>=stealth',shorten >=1pt,auto,node distance=3cm,
		%	thick,main node/.style={circle,fill=blue!20,draw,font=\sffamily\Large\bfseries}]
			% \begin{tikzpicture}
			
			%\draw [loosely dashed] (13,0) -- (17,0);
			
			%\node at (-5.5,2) {$pr_1(a_1)-pr_2(a_1)$}; \node at (0.5,2) {$pr_2(a_1)$};
			
			%\node at (7.45,2) {$pr_1(a_2)-pr_2(a_2)$}; \node at (13.5,2) {$pr_2(a_2)$};
			
			%\node at (18.33,2) {$pr_1(a_n)-pr_2(a_n)$}; \node at (23.5,2) {$pr_2(a_n)$};

                \draw[->, thick] (-1, 0) -- (10,0);
                \draw[->, thick] (0, -1) -- (0, 10);

                \node at (5,-1) {$4i-1$};
                \node at (7,-1) {$4i$};

                \node at (-1,3) {$4j-3$};
                \node at (-1,5) {$4j-2$};
                \node at (-1,7) {$4j-1$};
                \node at (-1,9) {$4j$};

                \node at (5.65,3) {$u_{11}$};
                \node at (7.65,3) {$u_{21}$};
                \node at (5.65,5) {$u_{12}$};
                \node at (7.65,5) {$u_{22}$};

                \node at (7.65,7) {$v_{22}$};
                \node at (7.65,9) {$v_{12}$};
                \node at (5.65,6.65) {$v_{21}$};
                \node at (5.65,8.65) {$v_{11}$};

                \draw [dashed] (5,3) -- (5,0);
                \draw [dashed] (7,3) -- (7,0);

                \draw [dashed] (5,3) -- (0,3);
                \draw [dashed] (5,5) -- (0,5);
                \draw [dashed] (5,7) -- (0,7);
                \draw [dashed] (5,9) -- (0,9);

			\tikzstyle{every node}=[circle, draw, fill=black!50,inner sep=0pt, minimum width=4pt]
																							
			\node[circle,fill=black,draw] at (5,3) (u11) {};
                \node[circle,fill=black,draw] at (5,5) (u12) {};
                \node[circle,fill=black,draw] at (7,3) (u21) {};
                \node[circle,fill=black,draw] at (7,5) (u22) {};

                \node[circle,fill=black,draw] at (5,9) (v11) {};
                \node[circle,fill=black,draw] at (7,9) (v12) {};
                \node[circle,fill=black,draw] at (5,7) (v21) {};
                \node[circle,fill=black,draw] at (7,7) (v22) {};

			%%%%%%%%%%%%%%%%%%%%%%%%%%%%%%%%%%%%%%%%%%%%%%%%%%%%%%%%%%%%

			%\node[circle,fill=black,draw] at (-3.5,-2) (n10) {};

			\path[every node]
			
			(u11) edge (u12)
                (u21) edge (u22)
                (u12) edge (v21)
                (u22) edge (v22)

                (v21) edge (v22)
                (v22) edge (v12)
                (v11) edge (v12)

			;
		\end{tikzpicture}

	\end{center}
	
	\caption{The gadget corresponding to the variable\\ $x_i$ and the clause $C_j$.}\label{fig:Gadgeta}
\end{minipage}%
\begin{minipage}[b]{.5\textwidth}
  	\begin{center}
	\centering
  
  \begin{center}

		\begin{tikzpicture}[scale = 0.6]%[-,>=stealth',shorten >=1pt,auto,node distance=3cm,
		%	thick,main node/.style={circle,fill=blue!20,draw,font=\sffamily\Large\bfseries}]
			% \begin{tikzpicture}
			
			%\draw [loosely dashed] (13,0) -- (17,0);
			
			%\node at (-5.5,2) {$pr_1(a_1)-pr_2(a_1)$}; \node at (0.5,2) {$pr_2(a_1)$};
			
			%\node at (7.45,2) {$pr_1(a_2)-pr_2(a_2)$}; \node at (13.5,2) {$pr_2(a_2)$};
			
			%\node at (18.33,2) {$pr_1(a_n)-pr_2(a_n)$}; \node at (23.5,2) {$pr_2(a_n)$};

                \draw[->, thick] (-1, 0) -- (10,0);
                \draw[->, thick] (0, -1) -- (0, 10);

                \node at (2.65,-1) {$4i-3$};
                \node at (5,-1) {$4i-2$};
                 \node at (7.35,-1) {$4i-1$};
                \node at (9.35,-1) {$4i$};

                \node at (-1,5) {$4j-1$};
                \node at (-1,7) {$4j$};

                \node at (3,5.35) {$u_{11}$};
                \node at (3, 7.35) {$u_{21}$};
                \node at (5,5.35) {$u_{12}$};
                \node at (5,7.35) {$u_{22}$};

                \node at (10,5.35) {$v_{22}$};
                \node at (9.65,7.35) {$v_{12}$};
                \node at (7.65,5.35) {$v_{21}$};
                \node at (7.65,7.35) {$v_{11}$};

                \draw [dashed] (2.65,5) -- (2.65,0);
                \draw [dashed] (5,5) -- (5,0);
                \draw [dashed] (7.35,5) -- (7.35,0);
                \draw [dashed] (9.35,5) -- (9.35,0);

                 \draw [dashed] (2.65,5) -- (0,5);
                 \draw [dashed] (2.65,7) -- (0,7);

			\tikzstyle{every node}=[circle, draw, fill=black!50,inner sep=0pt, minimum width=4pt]
																							
			\node[circle,fill=black,draw] at (2.65,5) (u11) {};
                \node[circle,fill=black,draw] at (5,5) (u12) {};
                \node[circle,fill=black,draw] at (2.65,7) (u21) {};
                \node[circle,fill=black,draw] at (5,7) (u22) {};

                \node[circle,fill=black,draw] at (7.35,7) (v11) {};
                \node[circle,fill=black,draw] at (9.35,7) (v12) {};
                \node[circle,fill=black,draw] at (7.35,5) (v21) {};
                \node[circle,fill=black,draw] at (9.35,5) (v22) {};

			%%%%%%%%%%%%%%%%%%%%%%%%%%%%%%%%%%%%%%%%%%%%%%%%%%%%%%%%%%%%

			%\node[circle,fill=black,draw] at (-3.5,-2) (n10) {};

			\path[every node]
			
			(u11) edge (u12)
                (u12) edge (v21)
                (v21) edge (v22)
                (v22) edge (v12)
                (v11) edge (v12)
                (v11) edge (u22)
                (u21) edge (u22)

			;
		\end{tikzpicture}
																
	\end{center}
								
	\caption{The gadget corresponding to the literal $\overline{x}_i$ and clause $C_j$.}
	\label{fig:Gadgetb}
	\end{center}
\end{minipage}
\end{figure}

Now we are going to describe a graph $G_I$ constructed from an instance $I=(X, C, K)$ of Max $2$-SAT in \cite{KM2008}. Our graph is going to have vertices which will be integral points on the plane. In other words, our vertices will be pairs $(x,y)$ where both $x$ and $y$ are integers. 

Suppose $x_i$ ($1\leq i\leq n$) is a boolean variable from $X$ appearing in a clause $C_j$ ($1\leq j\leq m$) from $C$. If $x_i$ appears as a variable in $C_j$, then the graph corresponding to it is from Figure \ref{fig:Gadgeta}, and if $x_i$ appears as a negated variable in $C_j$, then the graph corresponding to it is from Figure \ref{fig:Gadgetb}. This graph is going to appear as a part of a larger graph. Sometimes, we will prefer not to draw the vertices $u_{ij}$ ($1\leq i,j \leq 2$). Thus, we will use a conventional sign for them. This sign is the one from Figure \ref{fig:ConventionalSign}.

\begin{figure}[ht]
\centering
  
  \begin{center}

		\begin{tikzpicture}[scale = 0.6]%[-,>=stealth',shorten >=1pt,auto,node distance=3cm,
		%	thick,main node/.style={circle,fill=blue!20,draw,font=\sffamily\Large\bfseries}]
			% \begin{tikzpicture}
			
			%\draw [loosely dashed] (13,0) -- (17,0);
			
			%\node at (-5.5,2) {$pr_1(a_1)-pr_2(a_1)$}; \node at (0.5,2) {$pr_2(a_1)$};
			
			%\node at (7.45,2) {$pr_1(a_2)-pr_2(a_2)$}; \node at (13.5,2) {$pr_2(a_2)$};
			
			%\node at (18.33,2) {$pr_1(a_n)-pr_2(a_n)$}; \node at (23.5,2) {$pr_2(a_n)$};

                \node at (0,0.5) {$v_{11}$};
                \node at (2,0.5) {$v_{12}$};
                \node at (0,-1.5) {$v_{21}$};
                \node at (2.5,-1.65) {$v_{22}$};

			\tikzstyle{every node}=[circle, draw, fill=black!50,inner sep=0pt, minimum width=4pt]
																							
			\node[circle,fill=black,draw] at (0,0) (v11) {};
                \node[circle,fill=black,draw] at (2,0) (v12) {};
                \node[circle,fill=black,draw] at (2,-2) (v22) {};
                \node[circle,fill=black,draw] at (0,-2) (v21) {};

			%%%%%%%%%%%%%%%%%%%%%%%%%%%%%%%%%%%%%%%%%%%%%%%%%%%%%%%%%%%%

			%\node[circle,fill=black,draw] at (-3.5,-2) (n10) {};

			\path[every node]
			
			(v11) edge (v12)
                (v12) edge (v22)
                (v22) edge (v21)

			;
		\end{tikzpicture}
																
	\end{center}
								
	\caption{The conventional sign.}
	\label{fig:ConventionalSign}
\end{figure}
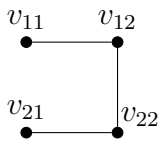

The vertices $v_{ij}$ ($1\leq i, j\leq 2$) are the ones from the corresponding graph. Now, if $C_j=t_{j_1}\vee t_{j_2}$  ($1\leq j_1 < j_2\leq m$) is a clause containing literals $t_{j_1}$, $t_{j_2}$ of variables $x_{j_1}$ and $x_{j_2}$, then the graph $G(C_j)$ corresponding to $C_j$ is the one from Figure \ref{fig:GraphCorrespondingClause}.

\begin{figure}[ht]
\centering
  
  \begin{center}

		\begin{tikzpicture}[scale = 0.6]%[-,>=stealth',shorten >=1pt,auto,node distance=3cm,
		%	thick,main node/.style={circle,fill=blue!20,draw,font=\sffamily\Large\bfseries}]
			% \begin{tikzpicture}
			
			%\draw [loosely dashed] (13,0) -- (17,0);
			
			%\node at (-5.5,2) {$pr_1(a_1)-pr_2(a_1)$}; \node at (0.5,2) {$pr_2(a_1)$};
			
			%\node at (7.45,2) {$pr_1(a_2)-pr_2(a_2)$}; \node at (13.5,2) {$pr_2(a_2)$};
			
			%\node at (18.33,2) {$pr_1(a_n)-pr_2(a_n)$}; \node at (23.5,2) {$pr_2(a_n)$};

               % \node at (0,0.5) {$v_{11}$};
               % \node at (2,0.5) {$v_{12}$};
               % \node at (0,-1.5) {$v_{21}$};
               % \node at (2.5,-1.65) {$v_{22}$};

                \draw[->, thick] (-1, 0) -- (10,0);
                \draw[->, thick] (0, -1) -- (0, 10);

                \node at (2,-1) {$4j_1-1$};
                \node at (4,-1) {$4j_1$};

                \node at (7,-1) {$4j_2-1$};
                \node at (9,-1) {$4j_2$};

                \node at (-1.5,7) {$4j-1$};
                \node at (-1,9) {$4j$};

                %\draw[thick] (0,9) arc (0:360:1cm);

                \draw [dashed] (2,7) -- (2,0);
                \draw [dashed] (4,7) -- (4,0);
                \draw [dashed] (7,7) -- (7,0);
                \draw [dashed] (9,7) -- (9,0);

                \draw [dashed] (2,7) -- (0,7);
                \draw [dashed] (2,9) -- (0,9);

			\tikzstyle{every node}=[circle, draw, fill=black!50,inner sep=0pt, minimum width=4pt]
																							
			\node[circle,fill=black,draw] at (2,7) (v27) {};
                \node[circle,fill=black,draw] at (4,7) (v47) {};
                \node[circle,fill=black,draw] at (2,9) (v29) {};
                \node[circle,fill=black,draw] at (4,9) (v49) {};

                \node[circle,fill=black,draw] at (7,7) (v77) {};
                \node[circle,fill=black,draw] at (7,9) (v79) {};
                \node[circle,fill=black,draw] at (9,7) (v97) {};
                \node[circle,fill=black,draw] at (9,9) (v99) {};

                \node[circle,fill=black,draw] at (0,7) (v07) {};
                \node[circle,fill=black,draw] at (0,9) (v09) {};

			%%%%%%%%%%%%%%%%%%%%%%%%%%%%%%%%%%%%%%%%%%%%%%%%%%%%%%%%%%%%

			%\node[circle,fill=black,draw] at (-3.5,-2) (n10) {};

			\path[every node]
			
			(v27) edge (v47)
                (v47) edge (v49)
                (v49) edge (v29)

                (v77) edge (v97)
                (v97) edge (v99)
                (v99) edge (v79)

                (v07) edge (v09)

                (v09) edge [bend right] (v07)
                (v07) edge (v49)
                (v07) edge (v99)

			;
		\end{tikzpicture}
																
	\end{center}
								
	\caption{The graph $G(C_j)$ corresponding to the clause $C_j$.}
	\label{fig:GraphCorrespondingClause}
\end{figure}
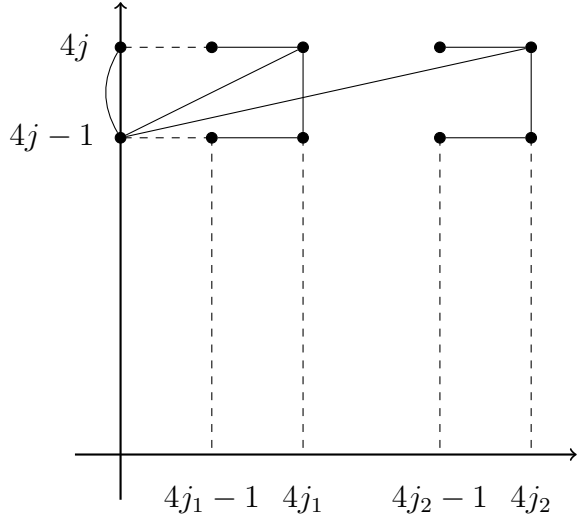

Now, for $i=1,...,n$ let $C_{j_1}, ..., C_{j_{{r(i)}}}$ ($j_1<j_2<...<j_{j_{{r(i)}}}$) be the clauses containing a literal of $x_i$ ($r(i)\geq 2$). Define a graph $G(I)$ corresponding to $I$ as follows: if $G(C_1)$, ..., $G(C_m)$ are the graphs corresponding to clauses $C_1,..., C_m$, then for $i=1,...,n$ cyclically connect $G(C_{j_1}), ..., G(C_{j_{{r(i)}}})$ Figure \ref{fig:GraphCorrespondingVariable}.
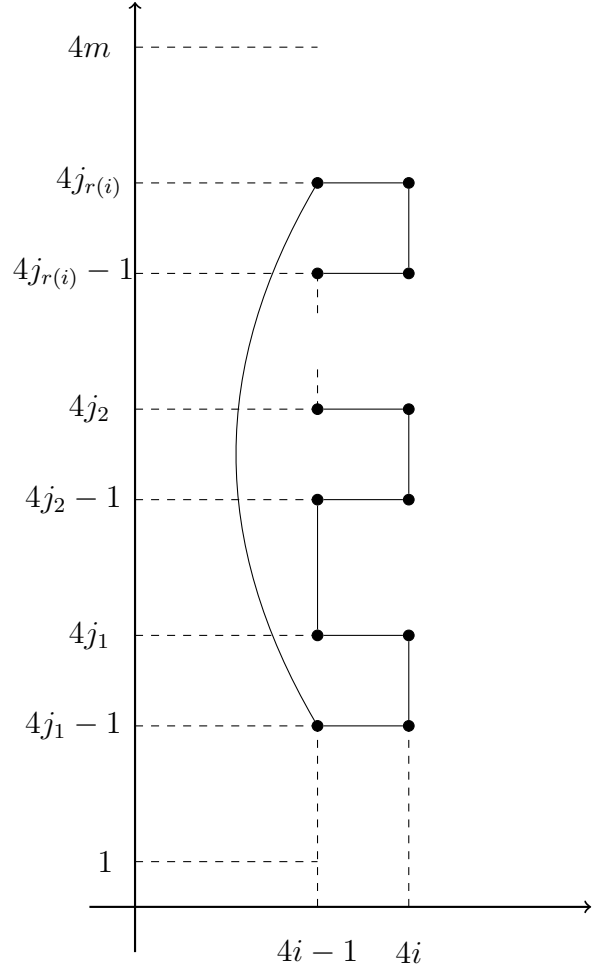
\begin{figure}[ht]
\centering
  
  \begin{center}

		\begin{tikzpicture}[scale = 0.6]%[-,>=stealth',shorten >=1pt,auto,node distance=3cm,
		%	thick,main node/.style={circle,fill=blue!20,draw,font=\sffamily\Large\bfseries}]
			% \begin{tikzpicture}
			
			%\draw [loosely dashed] (13,0) -- (17,0);
			
			%\node at (-5.5,2) {$pr_1(a_1)-pr_2(a_1)$}; \node at (0.5,2) {$pr_2(a_1)$};
			
			%\node at (7.45,2) {$pr_1(a_2)-pr_2(a_2)$}; \node at (13.5,2) {$pr_2(a_2)$};
			
			%\node at (18.33,2) {$pr_1(a_n)-pr_2(a_n)$}; \node at (23.5,2) {$pr_2(a_n)$};

               % \node at (0,0.5) {$v_{11}$};
               % \node at (2,0.5) {$v_{12}$};
               % \node at (0,-1.5) {$v_{21}$};
               % \node at (2.5,-1.65) {$v_{22}$};

                \draw[->, thick] (-1, 0) -- (10,0);
                \draw[->, thick] (0, -1) -- (0,20);

                \node at (4,-1) {$4i-1$};
                \node at (6,-1) {$4i$};

                \node at (-0.65,1) {$1$};

                \node at (-1.35,4) {$4j_1-1$};
                \node at (-1,6) {$4j_1$};

                \node at (-1.35,9) {$4j_2-1$};
                \node at (-1,11) {$4j_2$};

                \node at (-1.35,14) {$4j_{r(i)}-1$};
                \node at (-1,16) {$4j_{r(i)}$};

                \node at (-1,19) {$4m$};

                %\draw[thick] (0,9) arc (0:360:1cm);

                \draw [dashed] (4,11) -- (4,12);
                \draw [dashed] (4,14) -- (4,13);

                \draw [dashed] (4,0) -- (4,4);
                \draw [dashed] (6,0) -- (6,4);

                \draw [dashed] (0,1) -- (4,1);
                \draw [dashed] (0,4) -- (4,4);
                \draw [dashed] (0,6) -- (4,6);
                \draw [dashed] (0,9) -- (4,9);
                \draw [dashed] (0,11) -- (4,11);
                \draw [dashed] (0,14) -- (4,14);
                \draw [dashed] (0,16) -- (4,16);
                \draw [dashed] (0,19) -- (4,19);

			\tikzstyle{every node}=[circle, draw, fill=black!50,inner sep=0pt, minimum width=4pt]
																							
			\node[circle,fill=black,draw] at (4,4) (v44) {};
                \node[circle,fill=black,draw] at (6,4) (v64) {};
                \node[circle,fill=black,draw] at (6,6) (v66) {};
                \node[circle,fill=black,draw] at (4,6) (v46) {};

                \node[circle,fill=black,draw] at (4,9) (v49) {};
                \node[circle,fill=black,draw] at (6,9) (v69) {};
                \node[circle,fill=black,draw] at (6,11) (v611) {};
                \node[circle,fill=black,draw] at (4,11) (v411) {};

                 \node[circle,fill=black,draw] at (4,14) (v414) {};
                \node[circle,fill=black,draw] at (6,14) (v614) {};
                \node[circle,fill=black,draw] at (6,16) (v616) {};
                \node[circle,fill=black,draw] at (4,16) (v416) {};

			%%%%%%%%%%%%%%%%%%%%%%%%%%%%%%%%%%%%%%%%%%%%%%%%%%%%%%%%%%%%

			%\node[circle,fill=black,draw] at (-3.5,-2) (n10) {};

			\path[every node]
			
			(v44) edge (v64)
                (v64) edge (v66)
                (v66) edge (v46)

                (v46) edge (v49)

                (v49) edge (v69)
                (v69) edge (v611)
                (v611) edge (v411)

                (v414) edge (v614)
                (v614) edge (v616)
                (v616) edge (v416)

                (v44) edge [bend left] (v416)

			;
		\end{tikzpicture}
																
	\end{center}
								
	\caption{Cyclically joining the subgraphs that correspond to the variable $x_i$.}
	\label{fig:GraphCorrespondingVariable}
\end{figure}

As it is stated in \cite{KM2008}, the constructed graph $G(I)$ may not be connected. Therefore in order to obtain a connected one let us consider a graph $G_I$ constructed from $G(I)$ as it is stated in Figure \ref{fig:GraphConnected}. \cite{KM2008} states that $G_I$ is a connected bipartite graph of maximum degree three with $|V(G_I)|=22m$, $|E(G_I)|=24m-1$, and $\nu(G_I)=\frac{|V(G_I)|}{2}=11m$. Define: $k=7m+K-1$. Theorem 1 from \cite{KM2008} proves:

\begin{theorem}
    \label{thm:MasterThesisTheorem} (\cite{KM2008}) For every instance $I=(X, C, K)$ of Max 2-SAT, there exists a truth assignment $\Tilde{\alpha}=(\alpha_1,...,\alpha_n)$, such that at least $K$ of clauses $C_1, ..., C_m$ are satisfied by $\Tilde{\alpha}$, if and only if $L(G_I)\geq k$.
\end{theorem}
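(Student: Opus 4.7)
My plan is to prove the equivalence in both directions by exploiting the gadget-by-gadget structure of $G_I$, using the tight counts $|V(G_I)|=22m$, $|E(G_I)|=24m-1$ and $\nu(G_I)=11m$ as rigid constraints that force any maximum matching $F$ to be essentially perfect.

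For the forward direction, given a truth assignment $\tilde{\alpha}$ satisfying at least $K$ clauses, I would build a maximum matching $F$ together with a matching $M\subseteq G_I\setminus F$ with $|M|\geq 7m+K-1$ locally, gadget by gadget. Within each variable-in-clause gadget (Figures \ref{fig:Gadgeta}--\ref{fig:Gadgetb}), the value $\alpha_i$ dictates a canonical choice of which edges on the vertices $u_{ij},v_{ij}$ go into $F$ and which go into $M$; because $\tilde\alpha$ assigns a single value to $x_i$, these choices can be made coherently around each cycle of gadgets in Figure \ref{fig:GraphCorrespondingVariable}. A case analysis on each clause gadget (Figure \ref{fig:GraphCorrespondingClause}) would then show that a baseline contribution of $7$ edges per clause to $|M|$ is always attainable and that one extra $M$-edge becomes available in $G(C_j)$ precisely when at least one literal of $C_j$ is true under $\tilde\alpha$. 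Summing over all $m$ clauses, and absorbing a single global correction into the $-1$, yields $|M|\geq 7m+K-1$.

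For the reverse direction, suppose $F$ is a maximum matching of $G_I$ and $M\subseteq G_I\setminus F$ satisfies $|M|\geq 7m+K-1$. I would first classify, up to local equivalence, the possible configurations of $F$ restricted to a variable-in-clause gadget, showing that only two ``types'' maximize the local contribution of $F\cup M$: one corresponding to $x_i=\text{true}$ and one to $x_i=\text{false}$. The cycle of gadgets attached to variable $x_i$ then forces these local types to agree around the cycle, and so a well-defined truth assignment $\tilde\alpha=(\alpha_1,\dots,\alpha_n)$ can be extracted from $F$. A clause-by-clause accounting would show that every clause gadget $G(C_j)$ contributes at most $7$ edges to $|M|$ unless at least one of its literals is true under $\tilde\alpha$, in which case it contributes at most $8$. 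The lower bound $|M|\geq 7m+K-1$ then forces at least $K$ clauses to be satisfied.

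The main obstacle will be the consistency argument along the cyclic connectors attached to each variable. A priori, a maximum matching might try to mix the two gadget types around a single variable cycle and still produce many $M$-edges locally; this would break the reduction. Ruling this out requires a sharp edge-counting lemma along each cycle showing that any ``inconsistent switch'' between a true-type and a false-type gadget either shrinks $|F|$ below $\nu(G_I)=11m$ (contradicting maximality of $F$) or costs at least one edge of $M$ somewhere along the chain, so that inconsistency cannot be afforded without violating the $7m+K-1$ bound. Making this trade-off tight at every connector edge, and coupling the per-gadget local analyses to the cyclic global constraint, is the technical heart of the proof.
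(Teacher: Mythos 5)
First, a point of reference: the paper does not prove Theorem~\ref{thm:MasterThesisTheorem} at all --- it is quoted from \cite{KM2008} and used as a black box --- so there is no in-paper argument to compare yours against line by line. Your outline follows the same reduction template as that reference (encode the truth value of $x_i$ in which of the two perfect matchings of the variable cycle from Figure~\ref{fig:GraphCorrespondingVariable} the matching $F$ induces, then count the edges of a maximum matching of $G_I\setminus F$ clause by clause), so the approach is not wrong. The problem is that it remains a template: the two claims that carry the entire theorem, namely (i) that each clause gadget contributes a fixed baseline to $|M|$ plus exactly one extra edge precisely when $C_j$ is satisfied, and (ii) that a maximum matching cannot profitably mix the ``true-type'' and ``false-type'' configurations around a single variable cycle, are both asserted rather than proved, and you yourself identify (ii) as the technical heart without carrying it out. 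The numerical bookkeeping is also left loose: since $\nu(G_I)=11m=|V(G_I)|/2$, every maximum matching $F$ is perfect (not merely ``essentially perfect''), and the $-1$ in $k=7m+K-1$ is not a free-floating ``global correction'' to be absorbed --- it has a specific source in the appended path of Figure~\ref{fig:GraphConnected}, whose $4m$ vertices give $2m$ edges to $F$ but only $2m-1$ edges to any matching of $G_I\setminus F$. Without fixing where each of the $7m+K-1$ edges lives, neither inequality of the equivalence is actually established.

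The consistency step (ii) is where a blind attempt is most likely to fail, and it deserves to be named precisely. Perfectness of $F$ does \emph{not} imply that $F$ restricted to a variable cycle is a perfect matching of that cycle: the cycle vertices $v_{ij}$ of the gadgets in Figures~\ref{fig:Gadgeta} and~\ref{fig:Gadgetb} have degree-three attachments to $u_{12}$, $u_{22}$ and to the clause-level vertices of Figure~\ref{fig:GraphCorrespondingClause}, so $F$ can cover a cycle vertex by an edge leaving the cycle, and parity alone does not rule out an ``inconsistent switch'' between the two types along the cycle. One must argue, via the pendant paths $u_{11}u_{12}$ (which force specific edges into any perfect matching) and an explicit count of the $M$-edges lost at each switch, that every inconsistency either destroys perfectness of $F$ or costs at least one edge of $M$; this is exactly the lemma you defer. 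Similarly, in the reverse direction the claim that an unsatisfied clause gadget admits at most the baseline number of $M$-edges requires an exhaustive check of the (few, but not obviously few) configurations of $F$ inside $G(C_j)$ compatible with perfectness. Until those two local analyses are written down, the proposal is a correct plan but not a proof.
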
 Since the graph $G_I$ can be constructed from $I$ in polynomial time, Theorem \ref{thm:MasterThesisTheorem} implies the NP-hardness of computing $L(G)$ in connected, bipartite graphs of maximum degree three having a perfect matching.

\begin{figure}[ht]
\centering
  
  \begin{center}

		\begin{tikzpicture}[scale = 0.6]%[-,>=stealth',shorten >=1pt,auto,node distance=3cm,
		%	thick,main node/.style={circle,fill=blue!20,draw,font=\sffamily\Large\bfseries}]
			% \begin{tikzpicture}
			
			%\draw [loosely dashed] (13,0) -- (17,0);
			
			%\node at (-5.5,2) {$pr_1(a_1)-pr_2(a_1)$}; \node at (0.5,2) {$pr_2(a_1)$};
			
			%\node at (7.45,2) {$pr_1(a_2)-pr_2(a_2)$}; \node at (13.5,2) {$pr_2(a_2)$};
			
			%\node at (18.33,2) {$pr_1(a_n)-pr_2(a_n)$}; \node at (23.5,2) {$pr_2(a_n)$};

               % \node at (0,0.5) {$v_{11}$};
               % \node at (2,0.5) {$v_{12}$};
               % \node at (0,-1.5) {$v_{21}$};
               % \node at (2.5,-1.65) {$v_{22}$};

               \node at (8,4) {Connecting to one};
               \node at (8,3) {of the two vertices};
               \node at (8,2) {$u_{11}$ of $G(C_1)$};

               \node at (8,9) {Connecting to one};
               \node at (8,8) {of the two vertices};
               \node at (8,7) {$u_{11}$ of $G(C_m)$};

                \draw[->, thick] (-2, 0) -- (10,0);
                \draw[->, thick] (0, -1) -- (0, 10);

                \node at (-1.35,-1) {$-1$};

                \node at (1,1) {$1$};
                \node at (1,2) {$2$};
                \node at (1,3) {$3$};
                \node at (1,4) {$4$};
                \node at (1.35,8) {$4m-1$};
                \node at (1,9) {$4m$};

                \draw [dashed] (-1,1) -- (0.5,1);
                \draw [dashed] (-1,2) -- (0.5,2);
                \draw [dashed] (-1,3) -- (0.5,3);
                \draw [dashed] (-1,4) -- (0.5,4);

                 \draw [dashed] (-1,8) -- (0.5,8);
                  \draw [dashed] (-1,9) -- (0.5,9);

                   \draw [dashed] (-1,1) -- (-1,0);

                    \draw [dashed] (-1,4) -- (-1,5);
                    \draw [dashed] (-1,8) -- (-1,7);

                %\draw[thick] (0,9) arc (0:360:1cm);

                %\draw [dashed] (2,7) -- (2,0);

			\tikzstyle{every node}=[circle, draw, fill=black!50,inner sep=0pt, minimum width=4pt]
																							
			\node[circle,fill=black,draw] at (-1,1) (vm11) {};
                \node[circle,fill=black,draw] at (-1,2) (vm12) {};
                \node[circle,fill=black,draw] at (-1,3) (vm13) {};
                \node[circle,fill=black,draw] at (-1,4) (vm14) {};

                \node[circle,fill=black,draw] at (-1,8) (vm14minus1) {};
                \node[circle,fill=black,draw] at (-1,9) (vm14m) {};

			%%%%%%%%%%%%%%%%%%%%%%%%%%%%%%%%%%%%%%%%%%%%%%%%%%%%%%%%%%%%

			%\node[circle,fill=black,draw] at (-3.5,-2) (n10) {};

			\path[every node]
			
			(vm11) edge (vm12)
                (vm12) edge (vm13)
                (vm13) edge (vm14)

                (vm14minus1) edge (vm14m)

                (vm14) edge [bend left] (5,4)
                (vm14m) edge [bend left] (5,9)

			;
		\end{tikzpicture}
																
	\end{center}
								
	\caption{Making sure that the resulting graph is connected.}
	\label{fig:GraphConnected}
\end{figure}
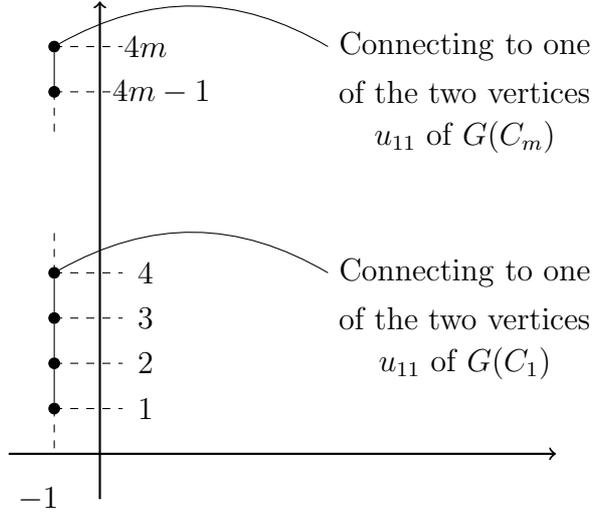

Now, we are going to introduce some constraints on vertices of $G_I$. Every vertex $z$ of $G(C_j)$ with degree one gets a constraint $W(z)=\{1\}$ (Figure \ref{fig:FourCycleGadget}). Moreover, the vertex $z=(-1,1)$ of $G_I$ (Figure \ref{fig:GraphConnected}) gets a constraint $W(z)=\{1\}$. For the remaining vertices $z$ we set $W(z)=\{1, 2\}$. Let $\nu_2^W(G)$ be the number of edges in the largest $2$-edge-colorable subgraph respecting our constraints $W(z)$ at every vertex $z$ of $G$. Note that if in a graph $H$, we have that at every vertex $x$ $W(x)=\{1,2\}$, then $\nu_2^W(H)=\nu_2(H)$.

\begin{lemma}
    \label{lem:nu2WFormula} For every instance $I=(X, C, K)$ of Max 2-SAT, we have \[\nu_2^W(G_I)=\nu(G_I)+L(G_I).\]
\end{lemma}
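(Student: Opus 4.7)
The plan is to prove the two inequalities separately; the direction $\nu_2^W(G_I)\ge\nu(G_I)+L(G_I)$ is by direct construction, while the direction $\nu_2^W(G_I)\le\nu(G_I)+L(G_I)$ requires an exchange/normalization argument on optimal pairs.

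For the lower bound I would pick a perfect matching $F$ of $G_I$ attaining $\nu(G_I\setminus F)=L(G_I)$ together with a maximum matching $M^*$ in $G_I\setminus F$ chosen to avoid the set $A:=\{z:W(z)=\{1\}\}$ of constrained vertices, and declare $M_1:=F$ the color-$1$ matching and $M_2:=M^*$ the color-$2$ matching. Since $F$ is a perfect matching, every $z\in A$ is already saturated by $F=M_1$, so $W(z)=\{1\}$ is respected at $z$; and $M_2=M^*$ carries no edge incident to $A$ by construction. This gives a valid $W$-respecting $2$-edge-coloring of size $|F|+|M^*|=\nu(G_I)+L(G_I)$. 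The key subtlety is that when $z\in A$ is a leaf of $G_I$ (as for the vertex $(-1,1)$ and every leaf of a $4$-cycle gadget), the unique edge at $z$ automatically belongs to $F$ and the avoidance of $A$ by $M^*$ is free; for the remaining $A$-vertices (those that acquired an extra incident edge during the gluing of the gadgets) I would use the gadget construction of \cite{KM2008} to choose an optimal $F$ so that $A$ is $M^*$-exposed in $G_I\setminus F$ without loss of size.

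For the upper bound, take an optimal pair $(M_1,M_2)$ realizing $\nu_2^W(G_I)$ and normalize it in two stages. \emph{Stage 1.} By a local swap argument, $M_1$ may be assumed to cover every $z\in A$: if some $z\in A$ is $M_1$-unsaturated, then it is also $M_2$-unsaturated by the constraint, and by optimality at least one neighbor of $z$ must be saturated in $M_1\cup M_2$, so removing its current edge from whichever matching contains it and inserting an edge at $z$ into $M_1$ preserves disjointness, the objective value, and the constraint at $z$. \emph{Stage 2.} I would then argue that $M_1$ can be further assumed to be a perfect matching of $G_I$. If not, Berge's theorem produces an $M_1$-augmenting path $P$ in $G_I$; replacing $(M_1,M_2)$ by $(M_1\triangle P,\,M_2\setminus P)$ changes the objective by $1-|M_2\cap P|$, so by optimality $|M_2\cap P|\ge 1$, and the crucial claim is that a shortest augmenting path $P$ can be chosen with $|M_2\cap P|=1$, preserving the objective while strictly increasing $|M_1|$. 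Iterating makes $M_1$ a perfect matching, after which $M_2\subseteq G_I\setminus M_1$ yields $|M_2|\le\nu(G_I\setminus M_1)\le L(G_I)$, and hence $|M_1|+|M_2|\le\nu(G_I)+L(G_I)$.

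The main obstacle is Stage 2, specifically the claim that a shortest $M_1$-augmenting path can be chosen to meet $M_2$ in exactly one edge. This is where the specific gadget structure of $G_I$ really matters: the $4$-cycle gadgets linked into a connected bipartite graph of maximum degree three tightly constrain the alternation pattern along such a path, so that long alternating $M_2$-segments cannot persist in a shortest augmenting path. An equivalent, arguably cleaner, route around this obstacle would be to exhibit directly a perfect matching $F^*$ of $G_I$ that contains every forced edge $e_z$ for $z\in A$ and is disjoint from $M_2$; such an $F^*$ gives $|M_2|\le\nu(G_I\setminus F^*)\le L(G_I)$ immediately, and its existence again reduces to a local rerouting inside each gadget, exploiting the fact that $M_2$ does not touch $A$.
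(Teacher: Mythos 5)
Your overall architecture matches the paper's: the lower bound comes from coloring a perfect matching $F$ with $\nu(G_I\setminus F)=L(G_I)$ in color $1$ and a maximum matching of $G_I\setminus F$ in color $2$, and the upper bound comes from normalizing an optimal pair $(M_1,M_2)$ so that $M_1$ is a perfect matching, after which $|M_2|\le \nu(G_I\setminus M_1)\le L(G_I)$. (In the lower bound your extra precaution of choosing $M^*$ to ``avoid $A$'' is not needed: every constrained vertex is a degree-one vertex of $G_I$, its unique edge lies in the perfect matching $F$, and hence no edge of $G_I\setminus F$ can be incident to it.)

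The genuine gap is exactly the one you flag in Stage~2. Your plan hinges on the claim that a shortest $M_1$-augmenting path $P$ can be chosen with $|M_2\cap P|=1$, which you do not prove and which is not what the structure of $G_I$ actually delivers. The paper argues differently: among all optimal $W$-respecting pairs with the pendant edges in $M_1$, it picks one minimizing the number of $M_1$-exposed vertices and, subject to that, the length of a shortest $M_1$-augmenting path. Any exposed vertex must lie on one of the long even cycles created by the cyclic joining (Figure~\ref{fig:GraphCorrespondingVariable}); minimality of the shortest augmenting path forces its two end edges to be adjacent to $M_2$-edges, which pins the exposed vertices to the joining edges, and then the augmenting path running the \emph{other} way around that even cycle is shown to contain no $M_2$-edge at all. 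Augmenting $M_1$ along it strictly increases $|M_1|+|M_2|$, contradicting optimality. So the contradiction comes from exhibiting an $M_2$-free augmenting path, not from one meeting $M_2$ exactly once; without this structural analysis of the gadget cycles your crucial claim remains unsupported (and it is unclear it even holds), and your fallback route via a perfect matching $F^*$ disjoint from $M_2$ is likewise only asserted. As written, the upper bound is not established.
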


\begin{proof} We start by proving 
\[\nu_2^W(G_I)\geq \nu(G_I)+L(G_I).\]
Let $F$ be a perfect matching of $G_I$ with $L(G_I)=\nu(G\backslash F)$. Since $F$ is a perfect matching, we have that $F$ covers the vertex $(-1,1)$ (Figure \ref{fig:GraphConnected}) and all degree one vertices of graphs $G(C_j)$, $1\leq j\leq m$. Let $F'$ be a maximum matching in $G\backslash F$. If we color the edges of $F$ with color 1, and the edges of $F'$ with color 2, then note that this coloring will respect our constraints $W(z)$. Thus, we get the lower bound above.

Now, in order to prove the upper bound 
\[\nu_2^W(G_I)\leq \nu(G_I)+L(G_I),\]
we show that there is a largest 2-edge-colorable subgraph in $G=G_I$ respecting constraints $W$ such that edges of color 1 form a perfect matching.

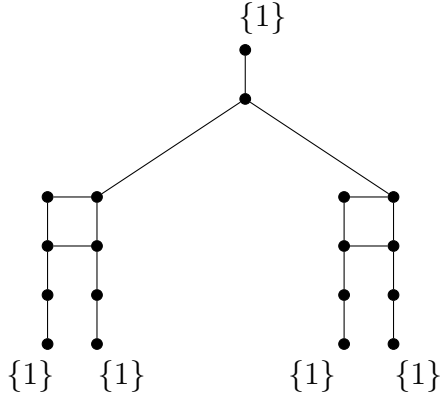
\begin{figure}[htbp]
 \begin{center}
  \begin{tikzpicture}[scale=0.65]
  
  \node at (4.35,4.65) {$\{1\}$};

  \node at (-0.35,-2.65) {$\{1\}$};
  \node at (1.5,-2.65) {$\{1\}$};

   \node at (5.35,-2.65) {$\{1\}$};
  \node at (7.5,-2.65) {$\{1\}$};

  %\node at (4,1.65) {$r$};
  
  %\draw[dashed] (1.5,0) -- (2.5,0);

  \tikzstyle{every node}=[circle, draw, fill=black!50,
                        inner sep=0pt, minimum width=4pt]
                        
    \node[circle,fill=black,draw] at (0,0) (n00) {}; 
    \node[circle,fill=black,draw] at (0,1) (n01) {}; 
     \node[circle,fill=black,draw] at (0,-1) (n0m1) {};
     \node[circle,fill=black,draw] at (0,-2) (n0m2) {};

    \node[circle,fill=black,draw] at (1,1) (n11) {}; 
     \node[circle,fill=black,draw] at (1,0) (n10) {}; 
     \node[circle,fill=black,draw] at (1,-1) (n1m1) {};
     \node[circle,fill=black,draw] at (1,-2) (n1m2) {};

%%%%%%%%%%%%%%%%%%%%%%%%%%%%%%%%%%%%%%%%%%%%%%%%%%%%%%%%%
      \node[circle,fill=black,draw] at (6,0) (n60) {}; 
    \node[circle,fill=black,draw] at (6,1) (n61) {}; 
     \node[circle,fill=black,draw] at (6,-1) (n6m1) {};
     \node[circle,fill=black,draw] at (6,-2) (n6m2) {};

    \node[circle,fill=black,draw] at (7,1) (n71) {}; 
     \node[circle,fill=black,draw] at (7,0) (n70) {}; 
     \node[circle,fill=black,draw] at (7,-1) (n7m1) {};
     \node[circle,fill=black,draw] at (7,-2) (n7m2) {};
     %%%%%%%%%%%%%%%%%%%%%%%%%%%%%%%%%%%%%%%%%%%%%%%%%%%%%%%%%
     \node[circle,fill=black,draw] at (4,3) (n43) {}; 
     \node[circle,fill=black,draw] at (4,4) (n44) {};

    \path[every node]
            
            (n00) edge (n0m1)
            (n0m1) edge (n0m2)

            (n10) edge (n1m1)
            (n1m1) edge (n1m2)

            (n00) edge (n01)
            (n01) edge (n11)
            (n11) edge (n10)
            (n10) edge (n00)

            %%%%%%%%%%%%%%%

            (n60) edge (n6m1)
            (n6m1) edge (n6m2)

            (n70) edge (n7m1)
            (n7m1) edge (n7m2)

            (n60) edge (n61)
            (n61) edge (n71)
            (n71) edge (n70)
            (n70) edge (n60)
            %%%%%%%%%%%%%%%%%%%%
            (n43) edge (n44)
            (n43) edge (n11)
            (n43) edge (n71)

			;
   \end{tikzpicture}
 \end{center}
\caption{Degree one vertices $z$ of $G(C_j)$ get a constraint $W(z)=\{1\}$. Others have $W(z)=\{1,2\}$. Here we assume that $C_j$ is a disjunction of two variables.}
\label{fig:FourCycleGadget}       
\end{figure} In other words, if $(M_1, M_2)$ is a pair of matchings in $G$ with $|M_1|+|M_2|=\nu_2^W(G)$, then we can choose a pair, such that $M_1$ is a perfect matching. Note that we can always assume that degree one vertices are covered by $M_1$. This in particular implies that $M_1$ covers the degree one vertex $(-1,1)$, hence the edge incident to it is in $M_1$. By a similar reasoning, we have that we can assume that the edges of $M_1$ lying on the path $(-1,1)-(-1,4m)$ (Figure \ref{fig:GraphConnected}) form a perfect matching of this path. This in particular implies that the other edges of this path lie in $M_2$. Hence all edges of $G$ connecting a vertex of this path to a graph $G(C_j)$ (Figure \ref{fig:GraphConnected}) do not belong to $M_1\cup M_2$.

Since degree one vertices $z$ of $G(C_j)$ are assigned the color constraint $W(z)=\{1\}$, we can always assume that the edges incident to them are a subset of $M_1$. Hence, the edges of $G_I$ adjacent to edges $u_{11}u_{12}$, $u_{21}u_{22}$ are from $M_2$. Thus, what we are left is to show that $M_1$ covers all vertices of the cycles of length multiple to four (Figure \ref{fig:GraphCorrespondingVariable}).

Now, let us consider all pairs $(M_1, M_2)$ of matchings of $G$ with $\nu_2^W(G)=|M_1|+|M_2|$, that satisfy previous constraints, and among them choose one such that
\begin{itemize}
    \item edges adjacent to degree one vertices are a subset of $M_1$,

    \item subject to the previous condition, we choose a pair in which the number of uncovered vertices with respect to $M_1$ is minimized,

    \item subject to the previous condition, the shortest $M_1$-augmenting path has smallest length.
\end{itemize} We will show that the number of uncovered vertices is zero, hence $M_1$ is a perfect matching. Suppose there is an uncovered vertex. Since the cycles from Figure \ref{fig:GraphCorrespondingVariable} are even, we have that there is another uncovered vertex on it. Let us consider the shortest possible $M_1$-augmenting path $P$ that connects the uncovered vertices $u$ and $v$. If the first or last edge of $P$ is not from $M_2$, then we can flip the edge from $M_1$ adjacent to this edge and get a new pair $(M'_1, M'_2)$ of matchings such that $\nu_2^W(G)=|M'_1|+| M'_2|$, the pair $(M'_1, M'_2)$ satisfies the first two conditions, however with respect to it there is a shorter $M'_1$-augmenting path. Thus, w.l.o.g., we can assume that these two vertices are adjacent to an edge from $M_2$, hence they have color 2. This means that they are not an edge from the graphs $G(C_j)$ (Figure \ref{fig:GraphCorrespondingClause}) and have been added thanks to cyclical joining from Figure \ref{fig:GraphCorrespondingVariable}.

Now, consider the $M_1$-augmenting path $P_u$ starting from $u$ and moving towards the other edge (lying outside $M_2$, and $M_1$) of the long even cycle. Note that since pendant edges are from $M_1$, we assumed that the edges adjacent to them are from $M_2$ in our pair $(M_1, M_2)$ with $\nu_2^W(G)=|M_1|+|M_2|$. Note that the structure of our graphs $G=G_I$ implies that there is no edge of color 2 in this $M_1$-augmenting path $P_u$. This just follows from the observation that all edges of this path that are not an edge from the graphs $G(C_j)$ (Figure \ref{fig:GraphCorrespondingClause}) and have been added thanks to cyclical joining Figure \ref{fig:GraphCorrespondingVariable} must belong to $M_1$. The remaining edges of $P_u$ are adjacent to an edge from $M_2$. Thus, by augmenting $M_1$ on this path, we will obtain a new pair $(M''_1, M''_2)$, that respects our constraints $W$, and contains more edges than $(M_1, M_2)$ does and has less uncovered $M_1$ vertices. This is a contradiction.
\end{proof}

\begin{theorem}
    \label{thm:UnweightednuWHardness} The problem of computing $\nu_2^W(G)$ is NP-hard in the class of connected, bipartite graphs of maximum degree three.
\end{theorem}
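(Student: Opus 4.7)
The plan is to give a polynomial-time reduction from Max 2-SAT to the problem of computing $\nu_2^W(G)$, leveraging the machinery already established in the paper. Given an instance $I=(X,C,K)$ of Max 2-SAT with $m$ clauses and $n$ variables, I construct the graph $G_I$ exactly as recalled from \cite{KM2008}, together with the vertex color-constraints $W$ described just before Lemma \ref{lem:nu2WFormula}: degree-one vertices of each $G(C_j)$, and the vertex $(-1,1)$ of $G_I$, are assigned $W(z)=\{1\}$, and every other vertex $z$ is assigned $W(z)=\{1,2\}$. Since $G_I$ is a connected bipartite graph of maximum degree three (as stated in \cite{KM2008}), the reduction stays within the claimed graph class, and the construction is clearly polynomial-time in $|I|$.

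The core of the argument is then just to chain together two results that are already available. By Lemma \ref{lem:nu2WFormula}, for the constructed instance I have the identity
\[
\nu_2^W(G_I) \;=\; \nu(G_I) + L(G_I).
\]
Using the fact from \cite{KM2008} that $G_I$ has a perfect matching with $\nu(G_I)=11m$, this simplifies to $\nu_2^W(G_I) = 11m + L(G_I)$. Therefore, setting $k=7m+K-1$ as in Theorem \ref{thm:MasterThesisTheorem}, the condition $L(G_I)\geq k$ is equivalent to $\nu_2^W(G_I)\geq 11m+k = 18m+K-1$.

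Combining these two equivalences, I conclude that the Max 2-SAT instance $I$ admits a truth assignment satisfying at least $K$ clauses if and only if $\nu_2^W(G_I)\geq 18m+K-1$. Since Max 2-SAT is NP-complete \cite{GJ,Max2SAT}, this yields the desired NP-hardness of computing $\nu_2^W(G)$ in connected bipartite graphs of maximum degree three.

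The main conceptual difficulty has already been absorbed into Lemma \ref{lem:nu2WFormula}, whose upper-bound direction required the careful exchange argument showing that one can assume the color-$1$ edges form a perfect matching. Once that lemma is in hand, the proof of the theorem is essentially bookkeeping: verifying that $G_I$ has the claimed structural properties, that the assignment of $W$ is well-defined on the constructed graph, and that the numerical equivalence between $\nu_2^W(G_I)\geq 18m+K-1$ and $L(G_I)\geq 7m+K-1$ goes through. No further gadget construction or combinatorial insight is needed beyond appealing to Theorem \ref{thm:MasterThesisTheorem} and Lemma \ref{lem:nu2WFormula}.
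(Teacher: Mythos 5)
Your proposal is correct and follows essentially the same route as the paper: it reduces from Max 2-SAT via the graph $G_I$ of \cite{KM2008} with the constraints $W$, and chains Theorem \ref{thm:MasterThesisTheorem} with Lemma \ref{lem:nu2WFormula} to get the equivalence $L(G_I)\geq k \iff \nu_2^W(G_I)\geq \nu(G_I)+k$, where your explicit threshold $18m+K-1$ agrees with the paper's $\frac{|V(G_I)|}{2}+k=11m+(7m+K-1)$.
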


\begin{proof} The proof follows from the NP-hardness of Max 2-SAT, Theorem \ref{thm:MasterThesisTheorem} and Lemma \ref{lem:nu2WFormula}. If $I=(X, C, K)$ is an instance of Max 2-SAT, then by Theorem \ref{thm:MasterThesisTheorem} and Lemma \ref{lem:nu2WFormula}, there is a truth assignment $\Tilde{\alpha}=(\alpha_1,...,\alpha_n)$, such that at least $K$ of clauses $C_1, ..., C_m$ are satisfied by $\Tilde{\alpha}$, if and only if $L(G_I)\geq k$, which is equivalent to $\nu_2^W(G_I)\geq \frac{|V(G_I)|}{2}+k$ since 
\[\nu(G_I)=\frac{|V(G_I)|}{2}\text{ and }\nu_2^W(G_I)=\nu(G_I)+L(G_I).\]
\end{proof}

\section{Conclusion and future work}
\label{sec:conclusion}

In this paper, we considered the maximum 2-edge-colorable subgraph problem in bipartite graphs. It is polynomial-time solvable in this class (as we observe in the beginning of the paper). A version of it where the edges of the input graph are unweighted however around every vertex we have a set of color constraints that shows which color can appear on edges incident to it is NP-hard in connected bipartite graphs of maximum degree three.

Lemma \ref{lem:ForestConstraintsWeightedKmatchings} implies that this result is not true in forests. The running time of the algorithm in this lemma has single exponential dependence on $k$. In terms of parameterized complexity theory \cite{FPTbook}, the result of the present paper demonstrates the so-called ``paraNP-hardness" of this problem when it is parameterized with respect to $k$. See \cite{kEdgeColoringFPT} for similar results.

The paper \cite{KowalikSIDMA:2018} states the problem of finding a $\chi'(G)$-coloring of an arbitrary graph $G$ in time $O^*(c^{n})$ as an interesting problem. Here $c>1$ and $n=|V|$. Note that this result would follow if the maximum $k$-edge-colorable subgraph problem can be solved in time $O^*(c^{n})$. We suspect that this is impossible, so an idea would be to demonstrate it under some complexity theoretical assumption like Exponential Time Hypothesis \cite{fomin:2010}. See \cite{fomin:2010} for more results of this type.

%% References
%%
%% Following citation commands can be used in the body text:
%% Usage of \cite is as follows:
%%   \cite{key}         ==>>  [#]
%%   \cite[chap. 2]{key} ==>> [#, chap. 2]
%%

%% References with bibTeX database:

\bibliographystyle{elsarticle-num}

% \bibliographystyle{elsarticle-harv}
% \bibliographystyle{elsarticle-num-names}
% \bibliographystyle{model1a-num-names}
% \bibliographystyle{model1b-num-names}
% \bibliographystyle{model1c-num-names}
% \bibliographystyle{model1-num-names}
% \bibliographystyle{model2-names}
% \bibliographystyle{model3a-num-names}
% \bibliographystyle{model3-num-names}
% \bibliographystyle{model4-names}
% \bibliographystyle{model5-names}
% \bibliographystyle{model6-num-names}

%\bibliography{sample}

\end{document}